\theoremstyle{definition}
\theoremstyle{plain}
\newtheorem{theorem}{Theorem}[section]
\newtheorem{proposition}[theorem]{Proposition}
\theoremstyle{definition}
\newtheorem*{ack}{Acknowledgments}
\newtheorem{construction}[theorem]{Construction}
\newtheorem{chunk}[theorem]{}
\theoremstyle{remark}
\newtheorem{remark}[theorem]{Remark}
\numberwithin{equation}{theorem}
\numberwithin{equation}{theorem}
\newcommand{\bbz}{\mathbb{Z}}
\newcommand{\col}{\colon}
\newcommand{\dd}{\partial}
\newcommand{\ges}{\geqslant}
\newcommand{\dtensor}[1]{\otimes^{\mathbf L}_{#1}}
\newcommand{\hh}[1]{\operatorname{H}(#1)}
\newcommand{\HH}[2]{\operatorname{H}^{#1}(#2)}
\newcommand{\lch}[2]{{\mathbf{R}\Gamma_{#1}(#2)}}
\newcommand{\Hom}[3]{\operatorname{Hom}_{#1}(#2,#3)}
\newcommand{\rhom}[3]{{\mathbf R}\!\operatorname{Hom}_{#1}(#2,#3)}
\newcommand{\ass}[1][R]{{\operatorname{ass}_{#1}\,}}
\newcommand{\supp}[1][R]{{\operatorname{supp}_{#1}\,}}
\newcommand{\spec}{{\operatorname{Spec}\,}}
\newcommand{\Ker}{{\operatorname{Ker}\,}}
\newcommand{\fa}{\mathfrak{a}}
\newcommand{\fp}{\mathfrak{p}}
\newcommand{\fq}{\mathfrak{q}}
\newcommand{\fm}{\mathfrak{m}}
\newcommand{\fn}{\mathfrak{n}}
\newcommand{\lra}{\longrightarrow}
\newcommand{\xra}{\xrightarrow}
\begin{document}

\title[Support and injective resolutions]{Support and injective resolutions of \\complexes over commutative rings}

\author{Xiao-Wu Chen}
\email{xwchen@mail.ustc.edu.cn}

\address{
Department of Mathematics,
University of Science and Technology of China,
Hefei 230026,
P. R. China.}

\author{Srikanth B. Iyengar}
\email{iyengar@math.unl.edu}

\address{
Department of Mathematics,
University of Nebraska,
Lincoln NE 68588,
U.S.A.}

\thanks{Chen was supported by Alexander von Humboldt Stiftung and China Postdoctoral Science Foundation. Iyengar was partly supported by NSF grants DMS 0602498 and DMS 0903493.  This research was carried out at the University of Paderborn, where both authors were visiting; they thank the Institute for Mathematics there for hospitality.}

\subjclass[2000]{13D25, 13D02}
\keywords{support, injective resolution, localization.}

\begin{abstract}
Examples are given to show that the support of a complex of modules over a commutative noetherian ring may not be read off the minimal semi-injective resolution of the complex. The same examples also show that a localization of a semi-injective complex need not be semi-injective.
\end{abstract}

\maketitle

\section{Introduction}
Let $R$ be a commutative noetherian ring, and $\spec R$ the set of prime ideals in $R$. Recall that the support of a finitely generated $R$-module $M$ is the set of points $\fp$ in $\spec R$ such that $M_{\fp}\ne 0$.  For arbitrary modules and, more generally, for complexes of modules, different notions of support have been used. From a homological perspective the one introduced by Foxby in \cite{Fo}, and recalled in Section~\ref{sec:support}, has proved to be quite useful. Foxby~\cite[2.8,2.9]{Fo} proved that a point $\fp$ is in the support of a complex $X$ with $\HH nX=0$ for $n\ll 0$  if and only if the injective hull of $R/\fp$ appears in the minimal semi-injective resolution of $X$.

This note gives examples that show that such a result does not extend to arbitrary complexes, contrary to the claims in \cite[5.1]{KI} and \cite[9.2]{BIK}; see Remark~\ref{rem:false}.

\section{Support and injective resolutions}
\label{sec:support}
For each point $\fp$ in $\spec R$, we write $k(\fp)$ for the residue field $R_{\fp}/\fp R_{\fp}$ of the local ring $R_{\fp}$. The \emph{support} of a complex $X$ of $R$-modules is the subset
\[
\supp X =\{\fp \in \spec R\mid \hh{X\dtensor Rk(\fp)} \neq 0\}\,.
\]
This notion was introduced by Foxby~\cite[p.157]{Fo} under the name `small support', to distinguish it from the `big support', namely, the set $\{\fp\in\spec R\mid {\hh X}_{\fp}\ne0\}$. They coincide when the $R$-module $\hh X$ is finitely generated---see~\cite[2.1]{Fo}---but not in general. Also, $\supp X$
and $\supp \hh X$ need not coincide; see~\cite[9.4]{BIK}.

A point $\fp$ in $\spec R$ is \emph{associated} to an $R$-module $M$ if it is the annihilator of an element in $M$; see \cite[\S6]{Ma}. We write $\ass M$ for the set of associated primes of $M$.

\subsection*{Injective modules}
In what follows $E_{R}(M)$ denotes the injective hull of an $R$-module $M$; see~\cite[\S18]{Ma}. Using \cite[18.4]{Ma}, it is easy to verify that there are equalities
\[
\supp E_{R}(R/\fp) = \{\fp\} = \ass E_{R}(R/\fp)\,.
\]
Let $E$ be an injective $R$-module.
By the structure theorem for injective $R$-modules, see~\cite[18.5]{Ma}, there is an isomorphism
\[
E \cong \bigoplus_{\fp\in\spec R}E(R/\fp)^{\mu(\fp)}\,,
\] 
where each $\mu(\fp)$, which can be $\infty$, depends only on $E$. It follows that one has equalities
\[
\supp E = \{\fp\in \spec R\mid \mu(\fp)\ne 0\} = \ass E\,.
\]
It is this observation that suggests the possibility of reading the support of a complex from its injective resolutions.

\subsection*{Injective resolutions}
We require some basic results concerning injective resolutions; for details see \cite{AFH} and \cite[Appendix B]{Kr}. We say that a complex $I$ of $R$-modules is \emph{homotopically injective} if $\Hom R-I$  preserves quasi-isomorphisms; it is \emph{semi-injective} if in addition each $R$-module $I^{n}$ is injective. For example, a complex $I$ of injective $R$-modules with $I^{n}=0$ for $n\ll0$ is semi-injective. Each complex $X$ of $R$-modules admits a \emph{semi-injective resolution}; that is, a quasi-isomorphism $X\to I$, where $I$ is semi-injective. Moreover, one can choose $I$ so that the extension $\Ker(\dd^{n})\subseteq I^{n}$ is essential for each integer $n$; here $\dd$ is the differential on $I$. Such a \emph{minimal} semi-injective resolution of $X$ is unique, up to isomorphism of complexes. 

\begin{proposition}
\label{prop:support} 
Let $R$ be a commutative noetherian ring and $X$ a complex of $R$-modules. If a complex $I$ of injective modules is quasi-isomorphic to $X$, then
\[
\supp X \subseteq \bigcup_{n\in \bbz} \ass I^n\,.
\]
Equality holds if $I_{\fp}$ is  minimal and homotopically injective for each $\fp\in\spec R$.
\end{proposition}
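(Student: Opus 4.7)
The plan is to treat the inclusion and the equality separately, in each case localizing at a prime $\fp$ to reduce to the local setting.

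For the inclusion $\supp X\subseteq \bigcup_n \ass I^n$, suppose $\fp\notin \ass I^n$ for every $n$. Localize at $\fp$ so that $(R,\fm,k)$ is local and the complex $I$ has no summand $E_R(R/\fm)$ in any degree. Since each summand $E_R(R/\fq)$ with $\fq\subsetneq \fm$ admits an element of $\fm\setminus\fq$ acting as an isomorphism, $\Gamma_\fm(I^n)=0$ for every $n$. I would then compute $\lch{\fm}{X}$ via the \v{C}ech complex $\check C$ on a generating set of $\fm$: since $\check C$ is a bounded complex of flat $R$-modules and $X\simeq I$,
\[
\lch{\fm}{X} \simeq X\dtensor{R}\check C \simeq I\otimes_R \check C.
\]
Each column $I^p\otimes_R \check C$ of this double complex represents $\lch{\fm}{I^p}\simeq \Gamma_\fm(I^p)=0$ and is thus acyclic; because $\check C$ has finite length, the filtration of the totalization by the $I$-degree is bounded in each total degree, so the associated spectral sequence converges strongly and yields $\lch{\fm}{X}\simeq 0$. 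Finally, each $\HH i{k\dtensor{R}X}$ is $\fm$-torsion, so
\[
k\dtensor{R}X \simeq \lch{\fm}{k\dtensor{R}X} \simeq k\dtensor{R}\lch{\fm}{X} \simeq 0,
\]
giving $\fm\notin \supp X$.

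For the equality under the hypothesis that each $I_\fp$ is minimal and semi-injective, only the reverse inclusion remains. Localize again and suppose $(R,\fm,k)$ is local, $I$ is minimal semi-injective, and $\mu_n(\fm)\neq 0$ for some $n$, where $\mu_n(\fm)$ denotes the multiplicity of $E_R(R/\fm)$ in $I^n$. Since $I$ is semi-injective, $\Hom R k I$ represents $\rhom R k X$. Term by term, $\Hom R k {I^n}$ equals the socle of $I^n$, a $k$-vector space of dimension $\mu_n(\fm)$; and by essentiality of $\Ker \dd^n \subseteq I^n$ the socle sits inside $\Ker \dd^n$, because any simple submodule of $I^n$ must meet every essential submodule. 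Hence the differential on $\Hom R k I$ vanishes and $\HH n{\rhom R k X}\cong k^{\mu_n(\fm)}\neq 0$, so $\rhom R k X\not\simeq 0$. Invoking the Foxby--Iyengar vanishing equivalence---over a local ring, $k\dtensor{R}X\simeq 0$ if and only if $\rhom R k X\simeq 0$---yields $\fm\in \supp X$.

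The main difficulties I expect are (i) justifying that column-acyclicity combined with the finite length of $\check C$ forces $I\otimes_R\check C$ to be acyclic, a standard spectral-sequence argument that warrants care because $I$ may be unbounded, and (ii) invoking the Foxby--Iyengar equivalence, which is the nontrivial ingredient driving the reverse inclusion.
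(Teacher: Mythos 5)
Your proof is correct, and the two halves compare differently with the paper's argument. For the equality (reverse inclusion) you do essentially what the paper does: localize, use semi-injectivity to write $\rhom RkX\simeq \Hom RkI$, observe that minimality forces the socle of $I^{n}$ into $\Ker(\dd^{n})$ so that the differential on $\Hom RkI$ vanishes, and then invoke the Foxby--Iyengar equivalence between vanishing of $\hh{k\dtensor RX}$ and of $\hh{\rhom RkX}$; the paper phrases this through $\Hom Rk{\Gamma_{\fm}(I)}$, but the content is identical. For the inclusion, however, you take a genuinely more hands-on route: the paper deduces it by citing \cite[2.1, 4.1]{FI} for the equivalence of the vanishing of $\hh{X\dtensor Rk}$ and of $\hh{\lch{\fm}X}$, together with \cite[3.5.1]{Li} for the quasi-isomorphism $\lch{\fm}X\simeq \Gamma_{\fm}(I)$ valid for any complex of injectives quasi-isomorphic to $X$, whereas you replace both citations by a direct computation: the \v{C}ech (stable Koszul) complex identification of $\lch{\fm}{X}$, column-wise acyclicity of $I\otimes_{R}\check C$ when no $I^{n}$ has $\fm$ as an associated prime, a boundedness-of-filtration argument for the unbounded complex $I$, and the torsion trick $k\dtensor RX\simeq \lch{\fm}{k\dtensor RX}\simeq k\dtensor R\lch{\fm}X$ to pass from derived-torsion vanishing back to the support condition. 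Your version is more self-contained (it effectively proves the instances of \cite[3.5.1]{Li} and of the implication (iii)$\Rightarrow$(i) that are needed), at the cost of assuming the identification $\lch{\fm}{-}\simeq \check C\dtensor R-$ for unbounded complexes and of the convergence argument you rightly flag; the paper's version is shorter because it delegates exactly these points to the literature. One small presentational point: in the equality step the hypothesis hands you minimality and homotopical injectivity of $I_{\fp}$ directly, and since $I_{\fp}$ consists of injective $R_{\fp}$-modules this is the same as semi-injectivity, so your phrasing is fine, but it is worth saying explicitly that $\fp\in\ass I^{n}$ localizes to $\mu_{n}(\fm)\ne 0$ and that $\hh{k(\fp)\dtensor RX}\cong \hh{k(\fp)\dtensor{R_{\fp}}X_{\fp}}$, which is the reduction both you and the paper leave implicit.
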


\begin{remark}
The additional hypotheses on $I$ hold if $R$ is regular, for then any complex of injectives is semi-injective; see \cite[2.4,2.8]{II}. They hold also when $I$ is minimal and $\mathrm{H}^{n}(X)=0$ for $n\ll0$, for then $I^{i}=0$ for $i\ll 0$, so $I$ and its localizations are semi-injective.  Thus, Proposition~\ref{prop:support} extends Foxby's result mentioned earlier.
\end{remark}

\begin{remark}
\label{rem:false} 
In \cite[5.1]{KI} it is claimed  that the inclusion in Proposition~\ref{prop:support} is an equality whenever $I$ is a minimal semi-injective resolution of $X$. This is, however, not the case; see Proposition~\ref{prop:main} for counter-examples. The error in the proof of \cite[5.1]{KI} occurs in the penultimate line, where it is asserted that a certain complex is homotopically injective; what can be salvaged from the argument is Proposition~\ref{prop:support}.  The last line of \cite[9.2]{BIK} is also incorrect. Only conditions (2)--(4) in op.~cit. are equivalent, and are implied by condition (1).
\end{remark}

Proposition~\ref{prop:support} is implicit in \cite[2.1]{FI}, so we provide only a sketch.

Given an ideal $\fa$ in $R$, we write $\Gamma_{\fa}(-)$ for the $\fa$-torsion functor on the category of $R$-modules, and $\lch {\fa}-$ for its right derived functor; see \cite{Fo} or \cite{Li}.

\begin{proof}[Proof of Proposition~\emph{\ref{prop:support}}]
By localization, it suffices to prove the following statement: Let $R$ be a local ring with maximal ideal $\fm$ and residue field $k$. If $\fm$ is in $\supp X$, then the complex $\Gamma_{\fm}(I)$ is non-zero; the converse holds if $I$ is minimal semi-injective. 

It follows from \cite[2.1, 4.1]{FI} that the following conditions are equivalent:
\begin{enumerate}[\quad\rm(i)]
\item $\hh{X\dtensor Rk}\ne 0$;
\item $\hh{\rhom RkX}\ne 0$;
\item $\hh{\lch{\fm}X}\ne 0$.
\end{enumerate}
Since the complex $I$ consists of injective modules and is quasi-isomorphic to $X$, the complexes $\lch{\fm}X$ and $\Gamma_{\fm}(I)$ are quasi-isomorphic; see \cite[3.5.1]{Li}. Therefore, if $\fm$ is in $\supp X$, the complex $\Gamma_{\fm}(I)$ must be non-zero.

Suppose $\fm\not\in\supp X$ holds, so that $\hh{\rhom RkX}=0$. When $I$ is semi-injective there are (quasi-)isomorphisms
\[
\rhom RkX\simeq  \Hom RkI\cong \Hom Rk{\Gamma_{\fm}(I)}\,.
\]
When $I$ is also minimal the differential on $\Hom RkI$ is zero, so $\hh{\Hom RkI}=0$ implies $\Gamma_{\fm}(I)=0$.
\end{proof}

\subsection*{Examples}
Next we focus on our main task; namely, giving examples that show that the inclusion in Proposition~\ref{prop:support} can be strict, even when $I$ is a minimal semi-injective complex. Their construction is motivated by an observation of Neeman~\cite[6.5]{Ne} and recent work of Iacob and Iyengar~\cite[Section 2]{II}. First, we record an elementary remark about associated primes of products.

\begin{remark}
\label{rem:ass}
Let $R$ be a commutative noetherian ring and let $\{M_{\lambda}\}$ be a family of $R$-modules. There are  inclusions
\[
\bigcup_{\lambda} \ass M_{\lambda} \subseteq \ass
{\big(\prod_{\lambda} M_{\lambda}\big)} \subseteq \{\fp\in\spec
R\mid \text{$\fp\subseteq \fq\in\ass M_{\lambda}$ for some
$\lambda$}\}.
\]
Indeed, the inclusion on the left holds since each $M_{\lambda}$ is isomorphic to a submodule of the product. For the one on the right: if a prime $\fp$ is the annihilator of an element $(m_{\lambda})$, then it is contained in the annihilator of each $m_{\lambda}$; pick one that is non-zero.
\end{remark}

In the proof of Proposition~\ref{prop:main} we use the following properties of injective hulls.

\begin{remark}
\label{rem:ihulls}
Let $R$ be a commutative noetherian ring, $\fn$ a prime ideal in $R$, and $E$ the injective hull of $R/\fn$. The following statements hold:
\begin{enumerate}[\quad\rm(1)]
\item Each $r$ in $R\setminus \fn$ is invertible on $E$, hence $E$ has a natural $R_{\fn}$-module structure.
\item
The $R_{\fn}$-module $E$ is Artinian.
\item
As an $R_{\fn}$-module, $E$ has finite length if and only if $\fn$ is a minimal prime.
\end{enumerate}

For (1) see \cite[18.4]{Ma}; for (2), see \cite[18.6]{Ma}; and for (3), see the proof of \cite[18.6(iv)]{Ma}.
\end{remark}

\begin{construction}
Let $R$ be a commutative noetherian ring of Krull dimension at least one; fix a non-minimal prime ideal $\fn$ in $R$. Suppose $R$ contains an element $x$ such that $\{r\in R\mid rx=0\}=(x)$; in particular, $x^{2}=0$.

For example, $R$ could be $\bbz[x]/(x^{2})$, and $\fn = (p,x)$, where $p$ is a prime number.

In what follows we use properties of injective hulls recalled in Remark~\ref{rem:ihulls}. These can be verified directly in the special case when $R=\bbz[x]/(x^{2})$.

Let $E$ be the injective hull of $R/\fn$ over $R$. By the hypothesis on $x$, the complex of $R$-modules $\cdots \xra{x} R\xra{x} R\to 0\to \cdots$, with $0$ in degree $1$, has cohomology only in degree $0$. Thus, applying $\Hom R-E$ to it, one gets a complex of $R$-modules
\[
J = \quad \cdots \lra 0 \lra E \xra{\ x \ } E \xra{\ x\ } E \xra{\ x\ }\cdots
\]
with $0$ in degree $-1$ and $\mathrm{H}^{i}(J)=0$ for $i\ne 0$. Set $M=\mathrm{H}^{0}(J)$; the inclusion $\iota\col M \to J$ is then an injective resolution of $M$ over $R$. It is evidently minimal.

Part (3) of the result below shows that the inclusion in Proposition~\ref{prop:support} can be strict, while (4) shows that a localization of a semi-injective complex need not be homotopically injective. 
We write $\Sigma^{i}X$ for the $i$th suspension of a complex $X$.

\begin{proposition}
\label{prop:main}
Let $X=\prod_{i\in \bbz} \Sigma^iM$ and $I=\prod_{i\in\bbz}\Sigma^{i}J$, viewed as complexes of $R$-modules. The following statements hold.
\begin{enumerate}[{\quad\rm(1)}]
\item The complex $I$ is semi-injective and minimal.
\item
The natural map $\prod_{i\in\bbz}\Sigma^{i}\iota\col  X\to I$ is a quasi-isomorphism.
\item $\supp X = \{\fn\}\subsetneq \ass I^n$, for each  integer $n$.
\item For any prime $\fp$ in $\ass I^n$ with $\fp\ne \fn$, the complex of injective $R_{\fp}$-modules $I_{\fp}$ is acyclic but not contractible, and hence not homotopically injective.
 \end{enumerate}
\end{proposition}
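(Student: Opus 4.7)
I begin by recording the shape of the two complexes.  Since $J^{k}=E$ for $k\geq 0$ and is zero otherwise, $I^{n}=\prod_{i\leq n}E$ is an infinite product of copies of $E$; in $X=\prod_{i}\Sigma^{i}M$, only the single factor $\Sigma^{n}M$ contributes in degree $n$, so $X$ is the complex with $X^{n}=M$ and zero differential in every degree—equivalently, $X=\bigoplus_{i}\Sigma^{i}M$.  Parts (1) and (2) are now formal: each $\Sigma^{i}J$ is a bounded-below complex of injectives, hence semi-injective; since Hom into a product is the product of the Homs, and products of quasi-isomorphisms in $\operatorname{Mod} R$ are quasi-isomorphisms (products being exact), $I$ is semi-injective and the product map $\prod\Sigma^{i}\iota$ is a quasi-isomorphism.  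Minimality of $I$ reduces to the essentiality of $\prod M\subseteq\prod E$, which I verify by the following trick: for a nonzero tuple $(e_{i})$, either $(e_{i})\in\prod M=\prod E[x]$ already, or some $xe_{i}\neq 0$, in which case $x\cdot(e_{i})$ is a nonzero multiple lying in $\prod M$ because $x^{2}=0$.

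For (3) the decomposition of $X$ gives $\supp X=\supp M$, so I reduce to showing $\supp M=\{\fn\}$.  The inclusion $\fn\in\supp M$ is seen from $\rhom R{k(\fn)}M\simeq\operatorname{Hom}_{R}(k(\fn),J)$: each term equals the socle $E[\fn]=k(\fn)$, and the differential—multiplication by $x\in\fn$—vanishes.  For $\fp\neq\fn$: if $\fn\not\subseteq\fp$, an element $y\in\fn\setminus\fp$ acts locally nilpotently on the $R_{\fn}$-Artinian module $E$ while being a unit in $R_{\fp}$, forcing $E_{\fp}=0$ and hence $M_{\fp}=0$; if $\fp\supsetneq\fn$, an element of $\fp\setminus\fn$ acts as a unit on $M$ and as zero on $k(\fp)$, so $M\dtensor R k(\fp)=0$.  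To exhibit a prime in $\ass I^{n}$ distinct from $\fn$, fix a prime $\fp\subsetneq\fn$ (available because $\fn$ is non-minimal) and consider $E[\fp]=\{e\in E:\fp e=0\}$, which is the injective hull of $k(\fn)$ over the local Noetherian domain $R_{\fn}/\fp R_{\fn}$ and hence faithful over it by Matlis duality, i.e., $\operatorname{ann}_{R_{\fn}}(E[\fp])=\fp R_{\fn}$.  Writing $E[\fp]=\bigcup_{k}E[\fp+\fn^{k}]$ displays it as a countable union of finite-length submodules, so $E[\fp]$ is countably generated; any countable generating set $\{e_{n}\}$ satisfies $\bigcap_{n}\operatorname{ann}_{R_{\fn}}(e_{n})=\fp R_{\fn}$, and intersecting with $R$ yields $\operatorname{ann}_{R}((e_{n}))=\fp R_{\fn}\cap R=\fp$.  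The tuple $(e_{n})\in\prod_{\bbn}E\subseteq I^{n}$ therefore witnesses $\fp\in\ass I^{n}$.

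For (4), fix $\fp\in\ass I^{n}$ with $\fp\neq\fn$; by Remark~\ref{rem:ass} necessarily $\fp\subsetneq\fn$, and the argument above gives $E_{\fp}=0$, hence $M_{\fp}=0$.  Then $X_{\fp}=\bigoplus\Sigma^{i}M_{\fp}$ vanishes and by (2), $I_{\fp}\simeq X_{\fp}\simeq 0$, so $I_{\fp}$ is acyclic.  Non-contractibility—which implies failure of homotopic injectivity, since a contractible complex is manifestly homotopically injective—is detected by the same element $(e_{n})$: its localization $(e_{n})/1\in I_{\fp}^{n}=(\prod E)_{\fp}$ is nonzero with annihilator $\fp R_{\fp}$, giving a nonzero element of $\operatorname{Hom}_{R_{\fp}}(k(\fp),I_{\fp}^{n})$.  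Since $x\in\fp R_{\fp}$ and the differential of $I_{\fp}$ is multiplication by $x$, it acts as zero on every element killed by $\fp R_{\fp}$, so $\operatorname{Hom}_{R_{\fp}}(k(\fp),I_{\fp})$ has zero differential and therefore non-vanishing cohomology.  Were $I_{\fp}$ homotopically injective, this Hom complex would compute $\rhom{R_{\fp}}{k(\fp)}{I_{\fp}}\simeq\rhom{R_{\fp}}{k(\fp)}{0}=0$, a contradiction.  The main technical step is the Matlis-faithfulness identification $\operatorname{ann}_{R_{\fn}}(E[\fp])=\fp R_{\fn}$ together with the descent of annihilators from $R_{\fn}$ to $R$; once the element $(e_{n})$ is in hand, it drives both the strict containment in (3) and the Hom-complex computation in (4).
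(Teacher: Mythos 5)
Your proof is correct, and on the two substantive points it takes a genuinely different route from the paper. For the strict inclusion in (3), the paper argues indirectly: since $\fn$ is not minimal, $E$ has infinite length over $R_{\fn}$, and Artinianness then produces elements $e_{i}$ with $\fn^{i}e_{i}\neq 0$, so the tuple $(e_{i})$ in $I^{n}$ is not $\fn$-torsion, whence \emph{some} associated prime of $I^{n}$ differs from $\fn$. You instead fix a prime $\fp\subsetneq\fn$ and manufacture an element of $I^{n}$ whose annihilator is exactly $\fp$, using that $E[\fp]\cong E_{R_{\fn}/\fp R_{\fn}}(k(\fn))$ is faithful over $R_{\fn}/\fp R_{\fn}$ and countably generated (via the filtration by the finite-length submodules $E[\fn^{k}]$). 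This costs more machinery (change of rings for injective hulls, Matlis-type faithfulness) but is sharper: it shows that \emph{every} prime contained in $\fn$ lies in $\ass I^{n}$, and it supplies an explicit witness. For (4), the paper deduces non-contractibility from minimality (localization preserves minimality, and a nonzero minimal complex is not contractible), and then gets failure of homotopic injectivity from acyclicity; you go the other way, showing directly that $\Hom{R_{\fp}}{k(\fp)}{I_{\fp}}$ has zero differential and a nonzero term, which is incompatible with homotopic injectivity since $I_{\fp}\simeq 0$, and then deducing non-contractibility. Both are valid; yours avoids the fact that localization preserves minimality, at the price of needing the explicit element, while the paper's is shorter given that fact. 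Your direct verifications of the essentiality of $\Ker(\dd^{n})\subseteq I^{n}$ and of $\supp M=\{\fn\}$ fill in details the paper leaves as ``straightforward'' or derives from Foxby's bounded-below result; they are fine.

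Three small repairs. First, you never record that $\fn\in\ass I^{n}$, which is needed for the inclusion $\{\fn\}\subseteq\ass I^{n}$; it is immediate from the left-hand inclusion of Remark~\ref{rem:ass}, since $E$ embeds in $I^{n}$ and $\ass E=\{\fn\}$. Second, in (4) your parenthetical states the implication backwards: non-contractibility does not in general imply failure of homotopic injectivity. What your argument actually proves is that $I_{\fp}$ is not homotopically injective, and non-contractibility then follows because a contractible complex is homotopically injective; rephrase accordingly (this is also why the element computation, not the parenthetical, carries the load). Third, ``the same element $(e_{n})$'' should be ``an element of $I^{n}$ with annihilator $\fp$,'' which exists by the very definition of $\fp\in\ass I^{n}$; the element you constructed in (3) is tied to the particular $\fp\subsetneq\fn$ chosen there, which need not be the arbitrary associated prime fixed in (4). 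None of these affects the substance of the argument.
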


\begin{proof}
Recall that $\iota\col M \to J$ is a quasi-isomorphism.

(1) The complex $\Sigma^{i}J$ consists of injective $R$-modules and $(\Sigma^{i}J)^{n}=0$ for $n<-i$, hence $\Sigma^{i}J$ is semi-injective. Therefore the same holds for $I$, since a product of semi-injective complexes is semi-injective.

As to the minimality, note that the differential $\dd^{n}\col I^{n}\to I^{n+1}$ is the map
\[
\prod_{i\geq n}E \xra{\ \begin{bmatrix} x \\ 0 \end{bmatrix}\ }
\bigg(\prod_{i\geq n}E \bigg)\oplus E = \prod_{i\geq n-1}\!E\,.
\]
Evidently $\Ker(\dd^{n})$ is the submodule $\prod_{i\geq n} M$ of $I^{n}$. It is now straightforward to verify that the extension $\Ker(\dd^{n})\subset I^{n}$ is essential. Thus $I$ is a minimal complex.

(2) holds because a product of quasi-isomorphisms is a quasi-isomorphism.

(3) One has $\supp M = \{\fn\}$. Indeed, $J$ is a minimal injective resolution of $E$ over $R$, so $\supp{M}= \ass {E}= \{\fn\}$. Observe that there is an isomorphism of complexes $X\cong \bigoplus_{i\in \bbz}\Sigma^i{M}$, so $\supp X = \{\fn\}$.

Since the $R$-module $I^{n}$ is isomorphic to $\prod_{i\ges n}E$, Remark~\ref{rem:ass} yields
\[
\{\fn \}=\ass E\subseteq \ass I^{n}\,.
\]
The claim is that this inclusion is strict; equivalently, that there exist elements in $I^{n}=\prod_{i\ges n}E$ that are not $\fn$-torsion.

Indeed, $E$ is the injective hull of $R/\fn$, so it is a module over the local ring $R_{\fn}$. Since $\fn$ is not a minimal prime ideal in $R$, by hypothesis, $R_{\fn}$ does not have finite length, and hence neither does the $R_{\fn}$-module $E$. However, $E$ is Artinian so for each integer $i\ge0$ there must be an element $e_{i}$ in $E$ such that $\fn^{i}\cdot e_{i}\ne 0$. Evidently, the element $(e_{i-n})_{i\ges n}$ in $I^{n}$ is not $\fn$-torsion.

(4) Fix a prime $\fp$ as in the hypothesis. By Remark~\ref{rem:ass}, one has $\fp\subset \fn$ so ${M}_{\fp}=0$, since $M$ is $\fn$-torsion, and hence $X_{\fp}=0$. As $I$ is quasi-isomorphic to $X$, the complex $I_{\fp}$ is quasi-isomorphic to $X_{\fp}$, and hence an acyclic complex of injective $R_{\fp}$-modules. It is also minimal since localization preserves minimality. Since the complex $I_{\fp}$ is non-zero, by the choice of $\fp$, it follows from the minimality that it is not contractible.
\end{proof}
\end{construction}

\begin{ack}
It is a pleasure to thank  Luchezar Avramov for detailed comments and suggestions on earlier versions of this article, and also Lars Winther Christensen and Henning Krause for discussions regarding this work.
\end{ack}

\end{document}